\newtheorem{theo}{Theorem}
\newtheorem{prop}{Proposition}
\newtheorem{lemma}{Lemma}
\newtheorem{remark}{Remark}
\def\ps@pprintTitle{%
   \let\@oddhead\@empty
   \let\@evenhead\@empty
   \def\@oddfoot{\reset@font\hfil\thepage\hfil}
   \let\@evenfoot\@oddfoot
}
\begin{document}

\begin{frontmatter}
\title{On a Berry-Esseen type limit theorem for Boolean convolution}
\author[aut1]{Mauricio Salazar}
\ead{maurma@cimat.mx}
\address[aut1]{Instititu de F\'isica, UASLP}

\begin{abstract}
We obtain a sharp estimate of the speed of convergence in the Boolean central limit theorem for measures of finite sixth moment. The main tool is a quantitative version of the Stieltjes-Perron inversion formula. 
\end{abstract}

\begin{keyword}
Stieltjes-Perron formula \sep L\'evy distance \sep Boolean Central limit theorem, Berry-Esseen theorem.
\end{keyword}

\end{frontmatter}

\section{Introduction}
In the previous work \cite{AS} of Arizmendi and the author, 
we proved that the speed of convergence in the Boolean central limit theorem is of order $O(\frac{1}{\sqrt{n}})$ for measures with bounded support and of order $O(\frac{1}{\sqrt[3]{n}})$ for measures with finite fourth moment. 
Also in \cite{AS} an example is given, consisting on an atomic measure with exactly two atoms, that shows that the rate $\frac{1}{\sqrt{n}}$ can not be improved. Thus, the estimate of the speed of convergence for measures of bounded support is sharp. 

In this paper we obtain an improvement of the above results by showing that the speed of convergence in the Boolean central limit theorem for measures of finite sixth moment is of order $O(\frac{1}{\sqrt{n}})$, and thus by the example mentioned above, this estimate is sharp. 


\begin{theo}\label{T1}
Let $\mu$ be a probability measure such that $m_1(\mu)=0$, $m_2(\mu)=1$, and $m_6(\mu)<\infty$. Define the measure $\mu_n:=D_{\frac{1}{\sqrt{n}}}\mu^{\uplus n}$, where $\uplus$ stands for the Boolean convolution. Then, for $n$ large enough we have that 
\begin{equation}\nonumber
    d_{lev}(\mu_n,\mathbf{b})\leq \frac{7}{2}\frac{C+2}{\sqrt{n}},
\end{equation}
where $\mathbf{b}$ denotes the symmetric Bernoulli distribution, $\frac{1}{2}\delta_{-1}+\frac{1}{2}\delta_{1},$ and $C$ is a constant that depends only on $\mu$.
\end{theo}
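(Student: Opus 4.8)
\medskip
\noindent\emph{Plan of proof.} Boolean convolution is linearised by the Boolean cumulant transform: with $G_\nu(z)=\int(z-t)^{-1}\,d\nu(t)$, $F_\nu=1/G_\nu$ and $K_\nu(z)=z-F_\nu(z)$, one has $K_{\nu_1\uplus\nu_2}=K_{\nu_1}+K_{\nu_2}$ and $K_{D_\lambda\nu}(z)=\lambda K_\nu(z/\lambda)$, hence
\[
K_{\mu_n}(z)=\sqrt n\,K_\mu(\sqrt n\,z),\qquad F_{\mu_n}(z)=z-\sqrt n\,K_\mu(\sqrt n\,z),\qquad G_{\mu_n}=1/F_{\mu_n}.
\]
Since $K_{\mathbf b}(z)=1/z$, $F_{\mathbf b}(z)=z-1/z$ and $G_{\mathbf b}(z)=z/(z^{2}-1)$, the task splits into (i) showing that $G_{\mu_n}$ stays uniformly within $O(1/\sqrt n)$ of $G_{\mathbf b}$ on a horizontal line $\operatorname{Im}z=y_n$ close to the real axis, and (ii) feeding this into a quantitative Stieltjes--Perron inversion formula --- the tool announced in the abstract --- to produce the Lévy-distance bound.

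\medskip
\noindent\emph{Step 1: expansion of the cumulant transform.} Using $m_1(\mu)=0$, $m_2(\mu)=1$ and $m_6(\mu)<\infty$, and splitting the integral defining the remainder at $|t|=|w|/2$, one gets, for $|w|$ large,
\[
G_\mu(w)=\frac1w+\frac1{w^{3}}+\frac{m_3}{w^{4}}+\frac{m_4}{w^{5}}+O\!\Bigl(\frac{m_5+m_6/\operatorname{Im}w}{|w|^{6}}\Bigr),
\]
and, after inverting and subtracting, $K_\mu(w)=\frac1w+\frac{m_3}{w^{2}}+\frac{m_4-1}{w^{3}}+R(w)$ with $|R(w)|\le C\,|w|^{-4}\bigl(1+1/\operatorname{Im}w\bigr)$, where $C=C(\mu)$ depends only on $m_3,\dots,m_6$. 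Substituting $w=\sqrt n\,z$,
\[
K_{\mu_n}(z)=\frac1z+E_n(z),\qquad E_n(z)=\frac{m_3}{\sqrt n\,z^{2}}+\frac{m_4-1}{n\,z^{3}}+\sqrt n\,R(\sqrt n\,z).
\]
Taking $y_n$ a fixed multiple of $1/\sqrt n$, large enough that $\operatorname{Im}(\sqrt n\,z)=\sqrt n\,y_n$ is bounded below (so that the expansion is valid for all $x$ and the factor $1/\operatorname{Im}(\sqrt n z)$ is harmless), one obtains $\sup|E_n|=O(1/\sqrt n)$ on the line, the constant governed by $|m_3|$ and $C$; this is precisely where the sixth moment enters. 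Since $F_{\mu_n}-F_{\mathbf b}=-E_n$,
\[
G_{\mu_n}(z)-G_{\mathbf b}(z)=\frac{E_n(z)}{\bigl(z-\tfrac1z-E_n(z)\bigr)\bigl(z-\tfrac1z\bigr)} .
\]

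\medskip
\noindent\emph{Step 2: inversion and the estimate near $\pm1$ (the main obstacle).} Apply the quantitative Stieltjes--Perron lemma to the pair $(\mu_n,\mathbf b)$ on the line $\operatorname{Im}z=y_n$: it bounds $d_{lev}(\mu_n,\mathbf b)$ by a term of order $y_n$, a term measuring $G_{\mu_n}-G_{\mathbf b}$ along the line, and a tail term, the last being controlled because $\sup_n m_6(\mu_n)<\infty$ (the first six moments of $\mu^{\uplus n}$, hence of $\mu_n$, are fixed polynomials in $m_1,\dots,m_6$ and converge to those of $\mathbf b$). By the closed form of Step 1, away from $x=\pm1$ --- including across the gap $(-1,1)$, where one checks that the imaginary parts dominate in the two denominators --- the difference is $O(1/\sqrt n)$ pointwise and integrates harmlessly; the entire difficulty is concentrated in two shrinking neighbourhoods of $\pm1$. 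There $G_{\mathbf b}(x+iy_n)$ and $G_{\mu_n}(x+iy_n)$ are \emph{both} of size $\asymp(|x\mp1|+y_n)^{-1}$, so no termwise bound on their difference survives. Instead one uses that $F_{\mu_n}$ has a simple real zero $x_\pm^{(n)}=\pm1+O(1/\sqrt n)$ with $F_{\mu_n}'(x_\pm^{(n)})=2+O(1/\sqrt n)$ (both read off from the expansion), so that near $\pm1$ the function $G_{\mu_n}$ behaves like the Cauchy kernel $p_n^{\pm}(z-x_\pm^{(n)})^{-1}$ with $p_n^{\pm}=\tfrac12+O(1/\sqrt n)$: a kernel displaced by $O(1/\sqrt n)$ from, and of mass $O(1/\sqrt n)$-close to, that of $G_{\mathbf b}$. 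The difference of two such kernels, integrated against the Stieltjes--Perron weight over the neighbourhood, is $O(1/\sqrt n)$ rather than $O(1)$. The same bound $\sup|E_n|=O(1/\sqrt n)$ simultaneously rules out any further real zero of $F_{\mu_n}$ (so $\mu_n$ carries no stray atom) and bounds the absolutely continuous part of $\mu_n$, whose total mass is $O(1/\sqrt n)$; the neighbourhood of $-1$ is identical by symmetry.

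\medskip
\noindent\emph{Step 3: assembling.} Summing the bulk, the two neighbourhoods, and the $y_n$ and tail contributions, fixing $y_n$ as above, and tracking the numerical constants coming from the inversion lemma and from $\sup|E_n|$, one reaches $d_{lev}(\mu_n,\mathbf b)\le\frac72\,\frac{C+2}{\sqrt n}$ for $n$ large. I expect the crux to be exactly Step 2 --- proving that the individually large but nearly cancelling contributions of $G_{\mu_n}$ and $G_{\mathbf b}$ near $\pm1$ combine, uniformly in $n$, into something of order $1/\sqrt n$; this uniformity is what the finite sixth moment buys, improving the $O(1/\sqrt[3]{n})$ of \cite{AS}.
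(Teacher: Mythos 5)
Your plan takes a genuinely different route from the paper's, and the differences matter: the paper never compares $G_{\mu_n}$ to $G_{\mathbf b}$ at all. Instead it invokes a purely measure-theoretic fact (Proposition~\ref{ldmuab}: if $\mu$ has mean $0$, variance $1$ and puts mass $\geq 1-\epsilon$ on the two $\epsilon$-neighbourhoods of $\pm1$, then $d_{lev}(\mu,\mathbf b)\leq\tfrac72\epsilon$), and then uses Theorem~\ref{T2} together with the Nevanlinna/Hasebe representation $F_{\mu_n}(z)=z-\bigl(z-\tfrac{\alpha}{\sqrt n}-\tfrac1{\sqrt n}G_\omega(\sqrt n z)\bigr)^{-1}$ to bound $-\tfrac1\pi\int \operatorname{Im}G_{\mu_n}$ over the three regions away from $\pm1$, with $y=1/n$ and a separate cushion $\delta=2/\sqrt n$. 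The decoupling of $y$ from $\delta$ in Theorem~\ref{T2} is essential; it is what lets a very small $y$ coexist with a cushion of the right size.

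Your proposal, by contrast, tries to bound $d_{lev}$ via $\sqrt{8y/\pi}+\tfrac1\pi\int|\operatorname{Im}G_{\mu_n}-\operatorname{Im}G_{\mathbf b}|\,dx$, and this cannot yield the sharp rate. Two concrete obstructions:

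\emph{(a) The uniform bound $\sup|E_n|=O(1/\sqrt n)$ on the line fails near $x=0$.} With $y_n$ a fixed multiple of $1/\sqrt n$, at $x=0$ one has $w=\sqrt n\,z=i c$ with $c=\sqrt n\,y_n$ a \emph{constant}; the moment expansion of $K_\mu(w)$ is an asymptotic series valid only for $|w|$ large, so $R(w)$ need not be small there. Unless $K_\mu(ic)=1/(ic)$ by accident, one has $E_n(iy_n)=\sqrt n\bigl(K_\mu(ic)-1/(ic)\bigr)=\Theta(\sqrt n)$, not $O(1/\sqrt n)$. The paper sidesteps this by estimating $\operatorname{Im}G_{\mu_n}$ directly near the origin from the representation (the integrand is only $O(\sqrt C)$ there, over an interval of length $O(\sqrt C/\sqrt n)$), rather than via a large-$|w|$ expansion.

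\emph{(b) The $L^1$ comparison near $\pm1$ cannot be $O(1/\sqrt n)$.} Near $+1$, $-\operatorname{Im}G_{\mathbf b}(x+iy)$ is $\tfrac12$ times a Poisson kernel centred at $1$ with scale $y$, while $-\operatorname{Im}G_{\mu_n}(x+iy)$ is essentially $p_n^+$ times a Poisson kernel centred at $x_+^{(n)}=1+\Theta(1/\sqrt n)$ (generically, when $m_3(\mu)\neq0$). The $L^1$ distance between two Poisson kernels of scale $y$ whose centres differ by $\delta$ is $\asymp\min(1,\delta/y)$. With $\delta\asymp1/\sqrt n$, this is $\Theta(1)$ whenever $y\lesssim1/\sqrt n$ and never smaller than $\Theta(1/\sqrt n)/y\cdot y=\Theta(1)$ for the relevant choices; enlarging $y$ so that $\delta/y\ll1$ forces $\sqrt y\gg n^{-1/4}$, spoiling the smoothing term. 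In short, Kolmogorov/$L^1$ distance does not forgive the $O(1/\sqrt n)$ horizontal shift of the near-atoms, whereas the Lévy distance does. The paper's argument is shift-tolerant by design: it only ever shows that $\mu_n$ concentrates in $O(1/\sqrt n)$-neighbourhoods of $\pm1$, never that $\operatorname{Im}G_{\mu_n}$ and $\operatorname{Im}G_{\mathbf b}$ are close pointwise. (Relatedly, the ``term of order $y_n$'' you attribute to the inversion lemma should be $\sqrt{y_n}$, which is another reason $y_n\sim1/\sqrt n$ gives at best $n^{-1/4}$.)

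So the instinct that the crux is near $\pm1$ is right, and the Boolean-cumulant linearisation and the expansion of $K_\mu$ are correct as far as they go, but the $L^1$-of-densities route is structurally too lossy. To repair the argument you would have to replace Step~2 with a statement about mass concentration (as in Proposition~\ref{ldmuab}) and estimate $\int\operatorname{Im}G_{\mu_n}$ over the complement of the $\pm1$-neighbourhoods directly, which is exactly what the paper does via Theorem~\ref{T2} and the representation $F_{\mu_n}=z-1/W_n$.
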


The proof of this theorem relies on refinements of some estimates related to the Cauchy transform of a measure. These precise estimates are given in Theorem 2, which provides  a quantitative version of the Stieltjes-Perron inversion formula. Also in Proposition 2 we show an asymptotic upper bound for the Cauchy transform of a measure in terms of its moments.  

The sections are organized as follows. In Section 2, we present the preliminary material and technical results necessary to prove our main result. Particularly, in Section 2.2, we present Theorem 2 and discuss some interesting consequences. In Section 3, we prove Theorems 1 and 2.

\section{Preliminaries}

\subsection{The L\'evy distance}

Let $\mu$ and $\nu$ probability measures. We define the \textbf{L\'evy distance} between them to be
$$d_{lev}(\mu,\nu):=inf \{ \epsilon>0 \: | \: F(x-\epsilon)-\epsilon\leq G(x)\leq F(x+\epsilon)+\epsilon \: \: \: \text{for all } x\in \mathbb{R} \},$$
where $F$ and $G$ are the cumulative distribution functions of $\mu$ and $\nu$ respectively.

The following Proposition is the Lemma 2 in \cite{AS}, and it is a key ingredient in the proof of Theorem 1.

\begin{prop}\label{ldmuab}
Let $\mu$ be a probability measure of zero mean and unit variance. Suppose further that $\mu((-1-\epsilon,-1+\epsilon)\cup (1-\epsilon,1+\epsilon))\geq 1-\epsilon$ for some $\epsilon\in (0,1)$. Then
\begin{equation}\nonumber
d_{lev}(\mu,\mathbf{b})\leq \frac{7}{2}\epsilon.
\end{equation}
\end{prop}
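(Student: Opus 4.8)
The plan is to show that the hypotheses force $\mu$ to place mass close to $\tfrac12$ on each of the intervals $I_- := (-1-\epsilon,-1+\epsilon)$ and $I_+ := (1-\epsilon,1+\epsilon)$, and then to deduce the required two-sided estimate on the cumulative distribution function $F$ of $\mu$ from the definition of the L\'evy distance.

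First I would introduce $p := \mu(I_-)$, $q := \mu(I_+)$ and $r := \mu(T)$ where $T := \mathbb{R}\setminus(I_-\cup I_+)$, so that $r\le\epsilon$ and $p+q = 1-r\in[1-\epsilon,1]$. The key quantitative input is an estimate for the first moment of $\mu$ restricted to the tail $T$. Since $\int x^2\,d\mu = 1$ while $|x|\ge 1-\epsilon$ on $I_-\cup I_+$ gives $\int_{I_-\cup I_+} x^2\,d\mu \ge (1-\epsilon)^2(p+q)\ge(1-\epsilon)^3$, we obtain $\int_T x^2\,d\mu\le 1-(1-\epsilon)^3\le 3\epsilon$, hence by Cauchy--Schwarz $\bigl|\int_T x\,d\mu\bigr|\le\bigl(\mu(T)\int_T x^2\,d\mu\bigr)^{1/2}\le\sqrt{3}\,\epsilon$. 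On the other hand $\int_{I_-} x\,d\mu = -p+\theta_-$ and $\int_{I_+} x\,d\mu = q+\theta_+$ with $|\theta_-|\le p\epsilon$ and $|\theta_+|\le q\epsilon$, so $m_1(\mu)=0$ forces $|q-p|\le (p+q)\epsilon+\sqrt3\,\epsilon\le(1+\sqrt3)\epsilon$. Combined with $|p+q-1|\le\epsilon$ this yields $|p-\tfrac12|\le\tfrac{2+\sqrt3}{2}\epsilon$ and, symmetrically, $|q-\tfrac12|\le\tfrac{2+\sqrt3}{2}\epsilon$.

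With this in hand I would record the elementary bounds $F(x)\le r\le\epsilon$ for $x\le -1-\epsilon$, $0\le F(x)\le p+r$ for $x\in I_-$, $p\le F(x)\le p+r$ for $x\in[-1+\epsilon,1-\epsilon)$, $p\le F(x)\le 1$ for $x\in I_+$, and $1-\epsilon\le F(x)\le 1$ for $x\ge 1+\epsilon$. Writing $G$ for the cumulative distribution function of $\mathbf b$ --- the step function with values $0,\tfrac12,1$ on $(-\infty,-1)$, $[-1,1)$, $[1,\infty)$ --- it suffices, for $\delta:=\tfrac72\epsilon$, to verify $G(x-\delta)-\delta\le F(x)\le G(x+\delta)+\delta$ for all $x$, which after the substitutions $x\mapsto x\pm\delta$ is equivalent to the form of the definition of $d_{lev}$ given above. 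This is a short case analysis in the position of $x$; every case reduces to one of the numerical inequalities $r\le\epsilon\le\delta$, $p+r\le\tfrac12+\tfrac{4+\sqrt3}{2}\epsilon\le\tfrac12+\delta$, $p\ge\tfrac12-\tfrac{2+\sqrt3}{2}\epsilon\ge\tfrac12-\delta$ and $1-\epsilon\ge1-\delta$, each valid because $\sqrt3<3$.

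The only step that is not pure bookkeeping is the bound on $|q-p|$: the condition $m_1(\mu)=0$ on its own says nothing, because an arbitrarily small amount of mass placed far out in the tail could shift the mean by an arbitrary amount. What rules this out is precisely the variance bound $\int_T x^2\,d\mu\le 3\epsilon$ together with Cauchy--Schwarz, and this is where the normalization $m_2(\mu)=1$ enters in an essential way. I expect this to be the only place requiring care; once $p$ and $q$ are pinned near $\tfrac12$, the conclusion follows by unwinding the definition of $d_{lev}$.
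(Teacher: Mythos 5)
The paper does not give its own proof of this proposition: it is quoted verbatim as Lemma 2 of \cite{AS}, so there is no internal argument to compare against. Your proof is correct and self-contained. The bookkeeping all checks: $1-(1-\epsilon)^3\le 3\epsilon$ on $(0,1)$; Cauchy--Schwarz on the tail gives $|\int_T x\,d\mu|\le\sqrt{3}\,\epsilon$; the mean-zero condition then yields $|q-p|\le(1+\sqrt3)\epsilon$; and combined with $|p+q-1|=r\le\epsilon$ this gives $|p-\tfrac12|,\ |q-\tfrac12|\le\tfrac{2+\sqrt3}{2}\epsilon$. With $F$ and $G$ the CDFs of $\mu$ and $\mathbf b$ and $\delta=\tfrac72\epsilon$, the verification of $G(x-\delta)-\delta\le F(x)\le G(x+\delta)+\delta$ goes through in every region: the binding constraint is $p+r\le\tfrac12+\delta$, i.e.\ $\tfrac{4+\sqrt3}{2}\le\tfrac72$, which holds, and since each case only uses one-sided inequalities on $x$ ($x<-1-\delta\Rightarrow x<-1-\epsilon$, $x<1-\delta\Rightarrow x<1-\epsilon$, $x\ge-1+\delta\Rightarrow x\ge-1+\epsilon$, $x\ge1+\delta\Rightarrow x\ge1+\epsilon$), no restriction to $\delta<1$ is required. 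Your remark that the variance normalization is the essential input is accurate: absent the second-moment control on the tail, $m_1(\mu)=0$ alone could not prevent a vanishing tail mass at large $|x|$ from pushing $p$ and $q$ far from $\tfrac12$.
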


\subsection{The Cauchy Transform}


Throughout the paper $z$ denotes a complex number and we write $z=x+iy$, where $x$ and $y$ are real numbers.

The Cauchy transform (or Stieltjes transform) of a non-negative Borel measure $\mu$ is defined as
\begin{equation}\nonumber
    G_{\mu}(z):=\int_{\mathbb{R}}\frac{1}{z-t}d\mu(t) \quad \text{for }z\in \mathbb{C}^+,
\end{equation}
where $\mathbb{C}^+$ denotes the open upper complex half-plane.

We can recover a measure $\mu \in \mathcal{M}$ from its Cauchy transform via the Stieltjes-Perron inversion formula:

\begin{equation}\label{StieljesInversion}\nonumber
\mu([a,b])=\displaystyle\lim_{y \downarrow 0}-\frac{1}{\pi}\int_{a}^{b}Im(G_{\mu}(x+iy))dx,
\end{equation}
provided that $\mu(\{a,b\})=0$.

The following theorem is a quantitative version of the Stieltjes-Perron inversion formula which is tailored for our purposes. This will be proved in Section 3.1.

\begin{theo}\label{T2}
Let $\mu$ be a probability measure. Let $-\infty\leq a <b \leq \infty$. Then we have that for all $y>0$
\begin{equation}\nonumber
    \mu((a+\delta ,b-\delta])-\frac{2y}{\pi\delta}\leq -\frac{1}{\pi}\int_{a}^b ImG_{\mu}(x+iy)dx\leq \mu((a-\delta,b+\delta] ) +\frac{2y}{\pi \delta}.
\end{equation}
\end{theo}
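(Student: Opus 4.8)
The plan is to start from the identity
\[
-\frac{1}{\pi}\int_a^b \operatorname{Im} G_\mu(x+iy)\,dx
= \int_{\mathbb{R}} \left( \frac{1}{\pi}\int_a^b \frac{y}{(x-t)^2+y^2}\,dx \right) d\mu(t),
\]
which follows from the definition of the Cauchy transform together with Fubini's theorem (justified since the integrand is nonnegative). The inner integral is elementary: it equals
\[
P_y(t) := \frac{1}{\pi}\Big(\arctan\tfrac{b-t}{y} - \arctan\tfrac{a-t}{y}\Big),
\]
a number in $[0,1]$ that plays the role of a smoothed indicator of the interval $(a,b)$. So the whole problem reduces to comparing $\int_{\mathbb{R}} P_y\,d\mu$ with $\mu$ of a slightly enlarged (resp. shrunk) interval.

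For the upper bound, I would split $\mathbb{R}$ into $t\in(a-\delta,b+\delta]$ and its complement. On the first piece we bound $P_y(t)\le 1$, contributing at most $\mu((a-\delta,b+\delta])$. On the complement, $t$ is at distance at least $\delta$ from both endpoints $a$ and $b$ that "matter," so the two $\arctan$ terms are close to each other; quantitatively, using $\arctan u - \arctan v = \arctan\frac{u-v}{1+uv}$ and $|\arctan s|\le |s|$, one gets $P_y(t)\le \frac{1}{\pi}\cdot\frac{y\,\cdot(\text{length})}{(\text{dist})^2+\cdots}$, and after bounding crudely the tail contribution is at most $\frac{2y}{\pi\delta}$ (the factor $2$ absorbing the two endpoints, and the $\delta^{-1}$ coming from $\int_\delta^\infty \frac{y}{s^2+y^2}\,ds \le \int_\delta^\infty \frac{y}{s^2}\,ds = \frac{y}{\delta}$ after the change of variables $s=|a-t|$ or $s=|b-t|$). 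The lower bound is symmetric: on $t\in(a+\delta,b-\delta]$ we bound $P_y(t)\ge 1-\frac{2y}{\pi\delta}$ by the same arctangent estimate applied to the complementary angles, and on the complement $P_y(t)\ge 0$; dropping that nonnegative part only decreases the integral, giving $\int P_y\,d\mu \ge \mu((a+\delta,b-\delta])\big(1-\frac{2y}{\pi\delta}\big)\ge \mu((a+\delta,b-\delta]) - \frac{2y}{\pi\delta}$.

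The main obstacle, such as it is, is bookkeeping: handling the cases $a=-\infty$ and/or $b=\infty$ uniformly (there the relevant $\arctan$ is simply $\pm\pi/2$ and contributes no error, so the bound is in fact easier), and making sure the "distance to the nearer relevant endpoint" argument is written so that a single point $t$ outside $(a-\delta,b+\delta]$ is charged at most $\frac{y}{\pi\delta}$ per endpoint. I would also double-check the half-open conventions on the intervals, but since the Poisson kernel is continuous these endpoint conventions do not affect the integrals and can be chosen freely to match the statement. No delicate limit is needed — unlike the classical Stieltjes–Perron formula, here $y>0$ is fixed, which is exactly what makes the estimate quantitative.
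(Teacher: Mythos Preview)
Your proposal is correct and follows essentially the same route as the paper: both apply Fubini to reduce to integrating the Poisson kernel $P_y(t)$ against $d\mu$, then split according to whether $t$ lies in the appropriate enlarged or shrunken interval, bounding the complementary arctangent tails by $y/\delta$ via $\int_{\delta/y}^\infty \frac{dx}{1+x^2}\le \frac{y}{\delta}$. The only cosmetic difference is that the paper keeps the inner integral in the form $\int_{(a-t)/y}^{(b-t)/y}\frac{dx}{1+x^2}$ rather than writing out the arctangents, and your aside about the addition formula $\arctan u-\arctan v=\arctan\frac{u-v}{1+uv}$ is unnecessary (and would require a branch check), whereas the direct tail bound you also mention is exactly what the paper uses.
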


Note that if $\delta=\sqrt{y}$ and $y\downarrow 0$, then we obtain the Stieltjes-Perron inversion formula for probability measures.
Moreover, taking $a=-\infty$ and $\delta=\sqrt{\frac{2y}{\pi}}$,  we deduce that

\begin{equation}\nonumber
    d_{lev}(\mu^{y},\mu)\leq \sqrt{\frac{2y}{\pi}},
\end{equation}
where $\mu^{y}$ is the probability measure of density $-\frac{1}{\pi}Im(G_{\mu}(x+iy))dx$, i.e. $\mu^{y}=\mu*C_y$ where $C_y$ is the Cauchy distribution with location 0 and scale $y$.
This further implies the following inequality for the L\'evy distance between two probability measures in terms of the Cauchy transform
\begin{equation}\nonumber
    d_{lev}(\mu,\nu)\leq \sqrt{\frac{8y}{\pi}}+\frac{1}{\pi}\int_{\mathbb{R}} |Im(G_{\mu}(z))-Im(G_{\nu}(z))|dx.
\end{equation}

Next, we discuss some bounds for the Cauchy transform.

Since $|z-t|\geq y$ for all $t\in \mathbb{R}$, then it follows that
\begin{equation} \label{equ.cauchy.bound} 
    |G_{\mu}(z)|\leq \frac{\mu(\mathbb{R})}{y} \quad \text{for }z\in \mathbb{C}^+. 
\end{equation}

The following proposition gives another bound for $|G_{\mu}(z)|$, where instead we use the real part of $z$. In practice, this is very useful since $x$ is typically much larger than $y$. 
\begin{prop}\label{pro-cau.tra.est}
Let $\mu$ be a measure and $i\geq 0$. Then we have that
   \begin{equation}\nonumber
       |G_{\mu}(z)|< \frac{2\mu(\mathbb{R})}{|x|}+\frac{2^{i}\int_{\mathbb{R}}|t|^i d\mu(t)}{y|x|^i}, 
   \end{equation}
for any $x>0$.
\end{prop}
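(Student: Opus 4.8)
The plan is to bound $|G_\mu(z)|$ by the triangle inequality, $|G_\mu(z)|\le \int_{\mathbb{R}}\frac{1}{|z-t|}\,d\mu(t)$, and then split the domain of integration into the region where $t$ is close to the origin and the region where it is far, with the threshold placed at $|t|=x/2$. That is, write $\int_{\mathbb{R}}\frac{d\mu(t)}{|z-t|}=\int_{\{|t|<x/2\}}\frac{d\mu(t)}{|z-t|}+\int_{\{|t|\ge x/2\}}\frac{d\mu(t)}{|z-t|}$ and estimate the two pieces by different means, using the real part of $z$ on the first piece and the imaginary part on the second.

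On the near region $\{|t|<x/2\}$ I would use that, since $x>0$ and $|t|<x/2$, we have $|z-t|^2=(x-t)^2+y^2\ge (x-|t|)^2+y^2>(x/2)^2$, because $y>0$; hence $\frac{1}{|z-t|}<\frac{2}{x}$ pointwise on this set, and therefore $\int_{\{|t|<x/2\}}\frac{d\mu(t)}{|z-t|}<\frac{2}{x}\,\mu(\{|t|<x/2\})\le \frac{2\mu(\mathbb{R})}{x}$. On the far region $\{|t|\ge x/2\}$ I would use the crude bound $|z-t|\ge y$ together with the observation that $1\le \bigl(\tfrac{2|t|}{x}\bigr)^i=\tfrac{2^i|t|^i}{x^i}$ on that set, so that $\int_{\{|t|\ge x/2\}}\frac{d\mu(t)}{|z-t|}\le \frac{1}{y}\int_{\{|t|\ge x/2\}}d\mu(t)\le \frac{2^i}{y\,x^i}\int_{\{|t|\ge x/2\}}|t|^i\,d\mu(t)\le \frac{2^i}{y\,x^i}\int_{\mathbb{R}}|t|^i\,d\mu(t)$.

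Adding the two estimates yields exactly $|G_\mu(z)|<\frac{2\mu(\mathbb{R})}{|x|}+\frac{2^i\int_{\mathbb{R}}|t|^i\,d\mu(t)}{y|x|^i}$ (recalling $|x|=x$ since $x>0$), and the inequality is strict: if $\mu(\{|t|<x/2\})>0$ the first bound is already strict because $y>0$, and if $\mu(\{|t|<x/2\})=0$ then the statement follows from the second estimate together with the trivial strict bound on the empty first piece, the degenerate case $\mu=0$ being vacuous.

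There is essentially no hard step here; the argument is elementary once the domain split is made. The only genuine choice is the location of the threshold: splitting at $|t|=x/2$ is what produces the clean constants $2$ and $2^i$, and a different cutoff $|t|=cx$ would trade the constant $\frac{1}{1-c}$ in the first term against $c^{-i}$ in the second. I would simply fix $c=1/2$ from the start and carry out the two bounds as above.
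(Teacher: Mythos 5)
Your proof is correct and follows essentially the same route as the paper: split the integral at $|t|=|x|/2$, bound the near piece by $2/|x|$ using the real part, bound the far piece by $1/y$ using the imaginary part, and then promote the far-piece mass bound to a moment bound via $1\le(2|t|/|x|)^i$ on $\{|t|\ge|x|/2\}$. The discussion of strictness and the degenerate case is a small addition beyond what the paper spells out, but the core argument and the choice of cutoff are the same.
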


\begin{proof}
We have that
\begin{align}
   |G_{\mu}(z)| &\leq \int_{|t|\leq \frac{|x|}{2}} \frac{1}{|x-t|}d\mu(t)+\int_{|t|>\frac{|x|}{2}} \frac{1}{y}d\mu(t) \nonumber \\
   &\leq \frac{2}{|x|}\int d\mu(t)+\frac{1}{y} \int_{|t|>\frac{|x|}{2}}d\mu(t) \nonumber \\
   &< \frac{2\mu(\mathbb{R})}{|x|}+\frac{2^{i}\int_{\mathbb{R}}|t|^i d\mu(t)}{y|x|^i}.
   \nonumber
\end{align}
\end{proof}

In particular, taking $i=2$ we obtain that
\begin{equation}\label{eq.abs.G.0} 
 |G_{\mu}(z)| < \frac{2\mu(\mathbb{R})}{|x|}+\frac{ 4\int t^2 d\mu(t)}{yx^2}.  
\end{equation}




The \textbf{reciprocal Cauchy transform} (or $F$-transform) of a positive Borel measure $\mu \in \mathcal{M}$ is defined as
\begin{equation}\nonumber
F_{\mu}(z):=\frac{1}{G_{\mu}(z)}\quad \:\text{for } z\in \mathbb{C}^+.    
\end{equation}

Directly by definition, it is not too difficult to see that for a probability measure $\mu$ and $a>0$, then
\begin{equation}\label{eq.Ftrans.dil}
F_{D_a \mu}(z)=aF_{\mu}(z/a) \quad \text{for } z\in \mathbb{C}^+,
\end{equation}
where $D_a\mu$ denotes the dilation of a measure $\mu$ by a factor $a>0$; this means that $D_a\mu(B)=\mu(a^{-1}B)$ for all Borel sets $B \subset \mathbb{R}$.

The next proposition gives a fundamental representation of the $F$-transform for the probability measures that are of our interest.

\begin{prop}\label{prop.Ftrans-represent}
Let $\mu$ a probability measure such that $m_1(\mu)=0$, $m_2(\mu)=1$, and $m_6(\mu)<\infty$. Then there exists a real number $\alpha$ and a non-negative Borel measure $\omega$ such that $m_2(\omega)<\infty$ and

\begin{equation}\nonumber
    F_{\mu}(z)=z-\frac{1}{z-\alpha-G_{\omega}(z)} \quad \text{for } z\in \mathbb{C}^+. 
\end{equation}
\end{prop}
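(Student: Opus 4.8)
The plan is to build the representation by peeling off the first two moments from $F_\mu$ and analyzing the remainder as a Cauchy transform. Write the continued-fraction-type expansion of $F_\mu$ coming from the moment sequence: since $m_1(\mu)=0$ and $m_2(\mu)=1$, the function $G_\mu(z) = \frac{1}{z} + \frac{0}{z^2} + \frac{1}{z^3} + \cdots$ near infinity, so $F_\mu(z) = 1/G_\mu(z) = z - \frac{1}{z}(1+o(1))$, and more precisely $F_\mu(z) = z - H(z)$ where $H(z) := z - F_\mu(z)$ is a map from $\mathbb{C}^+$ into $\overline{\mathbb{C}^+}$ (a Nevanlinna/Pick function) because $F_\mu$ is the reciprocal Cauchy transform of a probability measure. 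The key structural fact I would invoke is the Nevanlinna representation: any analytic $H:\mathbb{C}^+\to\overline{\mathbb{C}^+}$ has the form $H(z) = a + bz + \int \frac{1+tz}{t-z}\,d\sigma(t)$ with $b\ge 0$, and the asymptotics $H(z) = \frac{1}{z} + o(1/z)$ (i.e. $zH(z)\to 1$) force $b=0$, $a+\int t\,d\sigma(t)$ related to the vanishing constant term, and $\int(1+t^2)\,d\sigma(t)=1$; in fact $zH(z)\to \int(1+t^2)d\sigma(t)=1$. So $H$ itself is (up to a positive multiple, here exactly $1$) the Cauchy-type transform of a probability measure, and $1/H(z)$ is again a reciprocal Cauchy transform. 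Concretely I expect $\frac{1}{z-F_\mu(z)} =: z - \alpha - G_\omega(z)$, which is exactly the claimed identity after rearranging.

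The steps in order: (1) Verify $H(z) = z - F_\mu(z)$ maps $\mathbb{C}^+\to\overline{\mathbb{C}^+}$; this is standard since $F_\mu$ has nonnegative imaginary part bounded by that of $z$. (2) Compute the asymptotic expansion of $H$ at $\infty$ using $m_1=0$, $m_2=1$: get $H(z) = 1/z + c_3/z^3 + \cdots$ where the $1/z^2$ term vanishes because $m_1=0$, and the leading coefficient is $m_2=1$. (3) Deduce from the Nevanlinna representation that $1/H$ is the reciprocal Cauchy transform of some probability measure $\rho$, i.e. $1/H(z) = F_\rho(z) = z - \beta - G_\tau(z)$ for a real $\beta$ and a positive measure $\tau$ — here I apply the representation one level down, and I need that $\rho$ again has enough moments. (4) Track moments: the hypothesis $m_6(\mu)<\infty$ should translate, through two applications of this peeling, into $m_2(\omega)<\infty$ for the final measure $\omega$; morally, each "level" of the continued fraction costs two moments, so six moments of $\mu$ buys two moments of $\omega$. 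Set $\alpha:=\beta$ and $\omega:=\tau$ and rearrange to obtain $F_\mu(z) = z - \frac{1}{z-\alpha-G_\omega(z)}$.

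The main obstacle I anticipate is step (4): making the moment bookkeeping rigorous. Having $m_6(\mu)<\infty$ does not by itself guarantee that the measures produced by the Nevanlinna representations have absolutely convergent moment integrals up to order two — one must relate the existence of a $2k$-th moment of $\mu$ to the finiteness of $\int t^{2}\,d\sigma(t)$-type quantities for the measures appearing after stripping two levels. The clean way is to use the known correspondence between finiteness of moments of $\mu$ and the number of terms in the asymptotic (Laurent) expansion of $G_\mu$ at $\infty$ with controlled error (a Hamburger-type criterion), and to check that this property is inherited under $G\mapsto 1/G$ and under subtracting the polynomial part, each such operation lowering the available order by $2$. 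A secondary, more technical point is ensuring $z-\alpha-G_\omega(z)$ is genuinely nonvanishing on $\mathbb{C}^+$ so that the reciprocal is legitimate, which again follows from it being a reciprocal Cauchy transform (its imaginary part is strictly negative off the real axis unless $\omega$ degenerates, and the probability normalization rules out degeneracy). I would organize the write-up so that the Nevanlinna representation and the moment-counting lemma are cited or quickly established first, after which the proposition is a two-line rearrangement.
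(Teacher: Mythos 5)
Your approach is essentially the paper's: both peel the continued fraction two levels by iterating the Nevanlinna representation for the reciprocal Cauchy transform. Where the paper cites Hasebe's Proposition~2.1 to get $F_\mu(z)=z-G_\nu(z)$ for a \emph{probability} measure $\nu$ (using $m_1(\mu)=0$, $m_2(\mu)=1$) and then applies the same result to $\nu$ to get $F_\nu(z)=z-\alpha-G_\omega(z)$, you derive the identical structure from the Nevanlinna/Pick representation of $H(z)=z-F_\mu(z)$ together with the asymptotics $zH(z)\to 1$; these are equivalent roads to the same decomposition.

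The obstacle you flag in step~(4) is genuine and is precisely what the paper outsources to Hasebe's Proposition~4.8: finiteness of $m_{k+2}(\mu)$ forces finiteness of the $k$-th moment of the measure appearing one level down, so $m_6(\mu)<\infty \Rightarrow m_4(\nu)<\infty \Rightarrow m_2(\omega)<\infty$, exactly the ``each level costs two moments'' bookkeeping you anticipate. As written, your proposal names this correspondence (a Hamburger-type criterion linking moments to the order of the Laurent-with-controlled-error expansion of $G_\mu$ at infinity, stable under $G\mapsto 1/G$ and polynomial subtraction) but does not establish it, so the argument is one cited lemma short of complete. With that lemma supplied, the remaining points you raise --- that $H$ maps $\mathbb{C}^+$ into $\overline{\mathbb{C}^+}$, and that $z-\alpha-G_\omega(z)$ is nonvanishing on $\mathbb{C}^+$ because it is itself a reciprocal Cauchy transform --- are correct and the proposition follows by the rearrangement you describe.
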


\begin{proof}
Let $\mu$ as in the hypothesis. Since $m_1(\mu)=0$ and $m_2(\mu)=1$, then by Proposition 2.1 in \cite{Has} we have that there exists a probability measure $\nu$ such that
\begin{equation}\nonumber
    F_{\mu}(z)=z-G_{\nu}(z) \quad \text{for } z\in \mathbb{C}^+.
\end{equation}

Moreover, since $m_6(\mu)<\infty$, then by Proposition 4.8 also in \cite{Has} we have that $m_4(\nu)<\infty$. Again by the same propositions aplied to $\nu$, we get that there exists a real number $\alpha$ and non-negative measure $\omega$ such that $m_2(\omega)<\infty$ and 
\begin{equation}\nonumber 
    F_{\nu}(z)=z-\alpha-G_{\omega}(z) \quad \text{for } z\in \mathbb{C}^+.
\end{equation}

The desired representation follows from the above equations.

\end{proof}

\begin{remark}
It can be shown that $\alpha=m_3(\mu)$ and $\omega(\mathbb{R})=m_4(\mu)-m_3(\mu)^2-1$ by using in the previous proof that $G_{\nu}(z)=z-F_{\mu}(z)$ is the K-transform of $\mu$, see \cite{SW}, and then expressing the first moments of $\nu$ in terms of the Boolean cumulants of $\mu$, see \cite{AS}, which in turn would be in terms of the first moments of $\mu$.
\end{remark}

\subsection{Boolean convolution}

Given probability measures $\mu$ and $\nu$, the \emph{Boolean convolution} $\mu \uplus \nu$, introduced by Speicher and Woroudi \cite{SW}, is the probability measure defined by the equation

\begin{equation}\nonumber
F_{\mu \uplus \nu}(z)=F_{\mu}(z)+F_{\nu}(z)-z \quad \text{for} \: z\in \mathbb{C}^+.    
\end{equation}

Let $\mu$ be a probability measure and $n$ be a positive integer. We want to obtain an expression for the F-transform of $\mu_n:=D_{\frac{1}{\sqrt{n}}}(\mu^{\uplus n})$.

First note that
\begin{equation}\nonumber
F_{\mu^{\uplus n}}(z)=(1-n)z+nF_{\mu}(z) \quad \text{for } z\in \mathbb{C}^+.
\end{equation}

Now, suppose further that $m_1(\mu)=0$, $m_2(\mu)=1$, and $m_6(\mu)<\infty$. Thus, by Proposition $\ref{prop.Ftrans-represent}$ we get that

\begin{equation}\nonumber
F_{\mu^{\uplus n}}(z)=z-\frac{n}{z-\alpha-G_{\omega}(z)} \quad \text{for } z\in \mathbb{C}^+,
\end{equation}
where $\alpha$ is a real number and and $\omega$ is a non-negative Borel measure such that $\int t^2 d\omega(t)<\infty$. Finally, applying  ($\ref{eq.Ftrans.dil}$) we obtain the representation

\begin{equation}\label{eq.Fun}
    F_{\mu_n}(z)=z-\frac{1}{z-\frac{\alpha}{\sqrt{n}}-\frac{1}{\sqrt{n}}G_{\omega}(\sqrt{n}z)} \quad \text{for } z\in \mathbb{C}^+. 
\end{equation}

\section{Proofs} 

\subsection{Proof of Theorem \ref{T2}.} 
 
Let $\mu$ be a probability measure. Choose $a$ and $b$ such that $-\infty\leq a <b \leq \infty$ and fix $y>0$. First, we rewrite the following integral

\begin{align}
    \int_{a}^b -ImG_{\mu}(x+iy)dx&=\int_{a}^b\int_{-\infty}^{\infty} \frac{y}{(x-t)^2+y^2}d\mu(t)dx \nonumber \\
    &=\int_{-\infty}^{\infty}\int_{a}^b \frac{y}{(x-t)^2+y^2}dxd\mu(t)
    \nonumber \\
    &=\int_{-\infty}^{\infty}\int_{\frac{a-t}{y}}^{\frac{b-t}{y}} \frac{1}{x^2+1}dxd\mu(t). \nonumber
\end{align}

Now, let $\delta \in (0,\frac{b-a}{2})$. It follows that

\begin{align}
    \int_{-\infty}^{\infty}\int_{\frac{a-t}{y}}^{\frac{b-t}{y}} \frac{1}{x^2+1}dxd\mu(t) &\geq \int_{a+\delta}^{b-\delta}\int_{\frac{a-t}{y}}^{\frac{b-t}{y}} \frac{1}{x^2+1}dxd\mu(t) \nonumber \\
    &\geq \int_{a+\delta}^{b-\delta}\int_{\frac{-\delta}{y}}^{\frac{\delta}{y}} \frac{1}{x^2+1}dxd\mu(t) \nonumber \\
    &= \int_{a+\delta}^{b-\delta}\big(\pi-\int_{|t|>\frac{\delta}{y}} \frac{1}{x^2+1}dx\big)d\mu(t) \nonumber\\
    &\geq \int_{a+\delta}^{b-\delta}(\pi-\frac{2y}{\delta})d\mu(t) \nonumber\\
    &\geq \pi\mu((a+\delta,b-\delta])-\frac{2y}{\delta}. \nonumber
\end{align}

So, we arrive to
\begin{equation}\nonumber
     \mu((a+\delta,b-\delta])-\frac{2y}{\pi\delta}\leq -\frac{1}{\pi}\int_{a}^b ImG_{\mu}(x+iy)dx.
\end{equation}
On the other hand, we have that
\begin{equation}\nonumber
    \int_{a}^b -ImG_{\mu}(x+iy)dx = \int_{a-\delta}^{b+\delta}\int_{\frac{a-t}{y}}^{\frac{b-t}{y}} \frac{1}{x^2+1}dxd\mu(t)+\int_{t\notin (a-\delta,b+\delta] }\int_{\frac{a-t}{y}}^{\frac{b-t}{y}} \frac{1}{x^2+1}dxd\mu(t).
\end{equation}

Next, note that

\begin{equation}\nonumber
    \int_{a-\delta}^{b+\delta}\int_{\frac{a-t}{y}}^{\frac{b-t}{y}} \frac{1}{x^2+1}dxd\mu(t)\leq \int_{a-\delta}^{b+\delta}\int_{-\infty}^{\infty} \frac{1}{x^2+1}dxd\mu(t)=\pi\mu((a-\delta,b+\delta]).
\end{equation}

Hence, splitting the integral over the complement of the interval $(a-\delta,b+\delta]$, we get
\begin{align}
    \int_{t\notin (a-\delta,b+\delta] }\int_{\frac{a-t}{y}}^{\frac{b-t}{y}} \frac{1}{x^2+1}dxd\mu(t)&=\int_{-\infty}^{a-\delta} \int_{\frac{a-t}{y}}^{\frac{b-t}{y}} \frac{1}{x^2+1}dxd\mu(t)+\int_{b+\delta}^{\infty} \int_{\frac{a-t}{y}}^{\frac{b-t}{y}} \frac{1}{x^2+1}dxd\mu(t)
    \nonumber \\
    &\leq\int_{-\infty}^{a-\delta} \int_{\frac{\delta}{y}}^{\infty} \frac{1}{x^2+1}dxd\mu(t)+\int_{b+\delta}^{\infty} \int_{-\infty}^{\frac{-\delta}{y}} \frac{1}{x^2+1}dxd\mu(t) \nonumber \\
    &\leq2\int_{\frac{\delta}{y}}^{\infty} \frac{1}{x^2+1}dxd\mu(t) \nonumber \\
    &=\frac{2y}{\delta}. \nonumber
\end{align}

Finally, we conclude that for any $y>0$
\begin{equation}\nonumber
    \frac{-1}{\pi}\int_{a}^b ImG_{\mu}(x+iy)dx\leq \mu((a-\delta,b+\delta]) +\frac{2y}{\pi\delta}.
\end{equation}

\subsection{Proof of Theorem \ref{T1}}

Fix a probability measure $\mu$ such that $m_1(\mu)=0$, $m_2(\mu)=1$, and $m_6(\mu)<\infty$. Define $\mu_n:=D_{\frac{1}{\sqrt{n}}}(\mu^{\uplus n})$. We begin by obtaining some representations for the imaginary part of the Cauchy transform of $\mu_n$.

Note that
\begin{equation}\label{eq.im.cauchy.un}
    -Im(G_{\mu_n}(z))=\frac{Im(F_{\mu_n}(Z))}{|F_{\mu_n}(z)|^2}.
\end{equation}
Recall that by ($\ref{eq.Fun}$) we have the representation
\begin{equation}\nonumber
    F_{\mu_n}(z)=z-\frac{1}{z-\frac{\alpha}{\sqrt{n}}-\frac{1}{\sqrt{n}}G_{\omega}(\sqrt{n}z)} \quad \text{for } z\in \mathbb{C}^+,
\end{equation}
where $\alpha$ is a real number and $\omega$ is a non-negative Borel measure such that $\int t^2 d\omega(t)<\infty$. Define $W_n(z)=z-\frac{\alpha}{\sqrt{n}}-\frac{1}{\sqrt{n}} G_{\omega}(\sqrt{n}z)$ so that $F_{\mu_n}(z)=z-\frac{1}{W_n(Z)}$.
It follows that
\begin{equation}\label{eq.im.cauchy2}
    -ImG_{\mu_n}(z)=\frac{y|W_n(z)|^2+Im(W_n(z))}{|zW_n(z)-1|^2}.
\end{equation}

Next, we establish two lemmas that carry the main estimations of the proof. But first, we define some constants and give an inequality that is vital for making such estimations.
Let $K=\max\{ \omega(\mathbb{R}), \int t^2 d\omega(t)\}$. Now,
take $C>\max \{ 5, \: |\alpha|+2,\: 4(K+1)^2,\: 1+\frac{1}{0.3^2}(30K+1)\}$, let $n>\max \{20^2\alpha^2,\: 20\cdot 30K,\: 16C^2 \}$, and fix $y=\frac{1}{n}$. 
Observe that by the inequality ($\ref{eq.abs.G.0}$) we deduce that 
\begin{equation}\label{eq.abs.G}
    |G_{\omega}(\sqrt{n}z)|<\frac{2K}{\sqrt{n}|x|}+\frac{4K}{\sqrt{n}x^2}.
\end{equation}
\begin{lemma}
 We have that $-\frac{1}{\pi}\int_{A_i} ImG_{\mu_n}(z)dx\leq \frac{1}{\pi \sqrt{n}}$ for $i=1,2,$ provided that $n$ is large enough, and where $A_1=(-\infty,-1-\frac{C}{\sqrt{n}}]$ and $A_2=[1+\frac{C}{\sqrt{n}},\infty)$.
\end{lemma}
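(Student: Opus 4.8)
The plan is to bound $-\operatorname{Im} G_{\mu_n}(z)$ pointwise on the tail regions $A_1$ and $A_2$ and then integrate. Starting from the representation (\ref{eq.im.cauchy2}), namely
$$-\operatorname{Im} G_{\mu_n}(z)=\frac{y|W_n(z)|^2+\operatorname{Im}(W_n(z))}{|zW_n(z)-1|^2},$$
I would first control the numerator and denominator separately on the region $|x|\ge 1+\tfrac{C}{\sqrt n}$ with $y=\tfrac1n$. For the numerator: $\operatorname{Im}(W_n(z)) = y - \tfrac{1}{\sqrt n}\operatorname{Im} G_\omega(\sqrt n z)$, and since $\operatorname{Im} G_\omega \le 0$ on $\mathbb{C}^+$ while $|G_\omega(\sqrt n z)|$ is small by (\ref{eq.abs.G}), the term $\operatorname{Im}(W_n(z))$ is of size $O(y)$ plus something nonnegative and small; simultaneously $|W_n(z)|$ is close to $|z|\approx |x|$, so $y|W_n(z)|^2 = O(y x^2)$ and the whole numerator is $O(yx^2)$ up to constants. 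For the denominator I would show $|zW_n(z)-1|\ge c\,x^2$ for a suitable absolute constant $c$ once $|x|$ is bounded below by $1+\tfrac{C}{\sqrt n}$: indeed $zW_n(z) = z^2 - \tfrac{\alpha}{\sqrt n}z - \tfrac{1}{\sqrt n}zG_\omega(\sqrt n z)$, and on the tail the dominant term is $z^2$ with $|z|^2\ge x^2 \ge (1+\tfrac{C}{\sqrt n})^2 > 1$, so $|zW_n(z)-1|$ is comparable to $x^2$; the choices $C>|\alpha|+2$, $C>4(K+1)^2$, and $n>16C^2$ are exactly what is needed to absorb the lower-order corrections and keep the constant clean.

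Combining these gives a pointwise bound of the shape $-\operatorname{Im} G_{\mu_n}(z)\le \dfrac{C' y}{x^2}$ on $A_1\cup A_2$ for an explicit constant $C'$ depending only on the previously fixed constants. Then integrating,
$$-\frac{1}{\pi}\int_{A_2} \operatorname{Im} G_{\mu_n}(z)\,dx \le \frac{C' y}{\pi}\int_{1+C/\sqrt n}^{\infty}\frac{dx}{x^2} = \frac{C' y}{\pi}\cdot\frac{1}{1+C/\sqrt n} \le \frac{C' y}{\pi},$$
and with $y=\tfrac1n$ this is $\tfrac{C'}{\pi n}$. To land on the claimed bound $\tfrac{1}{\pi\sqrt n}$ I would note $\tfrac{C'}{n}\le \tfrac{1}{\sqrt n}$ once $n\ge C'^2$, i.e. "for $n$ large enough"; the same computation works verbatim for $A_1$ by symmetry of the estimate in $x\mapsto -x$ (the bound only used $|x|$).

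The main obstacle I anticipate is the lower bound $|zW_n(z)-1|\gtrsim x^2$: one has to be careful that $z$ ranges over the \emph{horizontal line} $\operatorname{Im} z = y$ rather than deep in the half-plane, so $z^2 = x^2 - y^2 + 2ixy$ has real part that could in principle be small — but here $|x|\ge 1 + C/\sqrt n$ forces $|x|\ge 1$ hence $x^2 \ge 1 > y^2$ for large $n$, keeping $\operatorname{Re}(z^2)$ positive and of order $x^2$, and then the triangle inequality against the error terms $\tfrac{|\alpha|}{\sqrt n}|x|$, $\tfrac{1}{\sqrt n}|x|\,|G_\omega(\sqrt n z)|$ (itself $O(\tfrac{K}{\sqrt n |x|})$ by (\ref{eq.abs.G})), and the constant $1$ closes the estimate. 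Tracking all constants so that the final bound is exactly $\tfrac{1}{\pi\sqrt n}$ rather than merely $O(\tfrac1{\sqrt n})$ is the bookkeeping-heavy part, but the structural inequalities are straightforward.
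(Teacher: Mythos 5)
Your claimed lower bound on the denominator is where the argument breaks down, and it breaks down at precisely the most important part of the range. You assert that $|zW_n(z)-1|\gtrsim x^2$ on $A_1\cup A_2$, on the grounds that $zW_n(z)=z^2+\text{(small)}$ and $|z|^2\ge x^2>1$. But you need to subtract $1$: the quantity that matters is $z^2-1$, and near the endpoint $|x|=1+\tfrac{C}{\sqrt n}$ one has
\[
z^2-1=(x^2-y^2-1)+2ixy,\qquad |z^2-1|\approx 2\,\frac{C}{\sqrt n},
\]
which goes to zero, whereas $x^2\approx 1$. So the denominator $|zW_n(z)-1|^2$ can be as small as order $C^2/n$, not order $1$. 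This means your pointwise bound $-\operatorname{Im}G_{\mu_n}(z)\le \tfrac{C'y}{x^2}$ is false near the endpoints: the correct order there is $O(1)$, not $O(1/n)$. (This is exactly what you should expect, since $\mu_n\to\mathbf b$ and $\mathbf b$ has atoms at $\pm1$, so the smoothed density $-\tfrac1\pi\operatorname{Im}G_{\mu_n}(x+iy)$ develops a sharp peak of height $\sim 1$ and width $\sim\sqrt y$ near $x=\pm1$.) As a consequence the tail integral is genuinely of order $1/\sqrt n$, coming from a narrow region near the endpoint, and not $O(1/n)$ as your computation gives. The fact that your route produced a bound ($C'/n$) strictly better than what the lemma claims should itself have been a warning sign.

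The paper's proof avoids this by using the representation $-\operatorname{Im}G_{\mu_n}=\tfrac{\operatorname{Im}F_{\mu_n}}{|F_{\mu_n}|^2}$ (equation (\ref{eq.im.cauchy.un}) rather than (\ref{eq.im.cauchy2})) and bounding $|F_{\mu_n}(z)|\ge |x|-1$, which correctly captures the vanishing of the denominator as $|x|\downarrow 1$. This yields the pointwise bound $-\operatorname{Im}G_{\mu_n}(z)<\tfrac{C/n}{(|x|-1)^2}$, whose integral over $|x|\ge 1+\tfrac{C}{\sqrt n}$ is exactly $\tfrac{1}{\sqrt n}$. To repair your argument you would need the matching lower bound $|zW_n(z)-1|\gtrsim |x|\,(|x|-1)$ (which is what $|F_{\mu_n}|\ge |x|-1$ translates to after multiplying by $|W_n|\approx|x|$), not $\gtrsim x^2$, and then redo the integral with a factor $(|x|-1)^{-2}$ in place of $x^{-2}$.
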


\begin{proof}


Assume that $x \leq -1-\frac{C}{\sqrt{n}}$. Since $|x|>1$, then by ($\ref{eq.abs.G}$) we have that $|G_{\omega}(\sqrt{n}z)|< \frac{6K}{\sqrt{n}}$.

First, we want to bound below $|F_{\mu_n}(z)|$.  Observe that $Re(W_n(z))\leq  x+\frac{|\alpha|}{\sqrt{n}}+\frac{1}{\sqrt{n}}|G_{\omega}(\sqrt{n}z)|< x+\frac{|\alpha|}{\sqrt{n}}+\frac{6K}{n}$. As $x\leq -1-\frac{C}{\sqrt{n}}$, $\sqrt{n}>6K$, and $C> |\alpha|+1$, then it follows that $Re(W_n(z))<-1$. Therefore, $|W_n(z)|>1$, which further implies  $|\frac{1}{W_n(z)}|<1$. Hence, we deduce that $|Re(\frac{1}{W_n(z)})|<1$. Using this, we conclude that
\begin{equation}\nonumber
   |Re(F_{\mu_n}(z))|=|x-Re(\frac{1}{W_n(z)})|\geq |x|-|Re(\frac{1}{W_n(z)})|> -x-1,
\end{equation}
for $x \leq -1-\frac{C}{\sqrt{n}}$.

Now, we want to bound above $Im(F_{\mu_n}(z))$. As seen above, for $x\leq -1-\frac{C}{\sqrt{n}}$, one has that  $-Im(G_{\omega}(\sqrt{n}z))\leq |G_{\omega}(\sqrt{n}z)|< \frac{6K}{\sqrt{n}}$ and $|W_n(z)|>1$. Therefore, $Im(W_n(z))=y-\frac{1}{\sqrt{n}}Im(G_{\omega}(\sqrt{n}z))<\frac{1}{n} +\frac{6K}{n}$. Hence, we obtain that

\begin{equation}\nonumber
   Im(F_{\mu_n}(z))=y+\frac{Im(W_n(z))}{|W_n(z)|^2}< \frac{6K+2}{n}< \frac{C}{n}. 
\end{equation}

By the previous estimations and $(\ref{eq.im.cauchy.un})$, we conclude that $-Im(G_{\mu_n}(z))< \frac{C/n}{(x+1)^2}$ for $x\leq -1-\frac{C}{\sqrt{n}}$. It follows that
\begin{equation}\nonumber
    -\frac{1}{\pi}\int_{A_1} ImG_{\mu_n}(z)dx<\frac{1}{\pi}\int_{-\infty}^{-1-\frac{C}{\sqrt{n}}}\frac{C/n}{(x+1)^2}dx=\frac{1}{\pi \sqrt{n}}.
\end{equation}

The same estimation for $A_2$ follows from a similar argument.
\end{proof}

\begin{lemma}
 We have that $-\frac{1}{\pi}\int_{[-1+\frac{C}{\sqrt{n}},\:1-\frac{C}{\sqrt{n}}]} ImG_{\mu_n}(z)dx< \frac{2C}{3 \sqrt{n}}+\frac{6}{\pi \sqrt{n}}$.
\end{lemma}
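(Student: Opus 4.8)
The plan is to estimate $-\mathrm{Im}\,G_{\mu_n}(z)$ on the central interval $[-1+\frac{C}{\sqrt n},1-\frac{C}{\sqrt n}]$, splitting it according to whether $|x|$ is small (say $|x|\le \tfrac{1}{10}$, or some fixed threshold like $0.3$ as suggested by the constant $1+\frac{1}{0.3^2}(30K+1)$ appearing in the choice of $C$) or of order one but bounded away from $1$. On the region where $|x|$ is bounded below by a fixed constant, I would use the representation $-\mathrm{Im}\,G_{\mu_n}(z)=\dfrac{y|W_n(z)|^2+\mathrm{Im}(W_n(z))}{|zW_n(z)-1|^2}$ from (\ref{eq.im.cauchy2}) together with the bound (\ref{eq.abs.G}) on $G_\omega(\sqrt n z)$, which on $|x|\ge 0.3$ gives $|G_\omega(\sqrt n z)|<\frac{1}{\sqrt n}(\frac{2K}{0.3}+\frac{4K}{0.09})<\frac{30K}{\sqrt n}$. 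This makes $W_n(z)=z-\frac{\alpha}{\sqrt n}-\frac{1}{\sqrt n}G_\omega(\sqrt n z)$ a small perturbation of $z$, so $\mathrm{Im}(W_n(z))$, $|W_n(z)|^2$ are controlled and, crucially, the denominator $|zW_n(z)-1|^2$ is bounded below away from zero: since $z\approx x$ with $|x|$ between $0.3$ and $1-\frac{C}{\sqrt n}$, one has $|zW_n(z)-1|\ge |1-x^2|-(\text{small})\ge \frac{2C}{\sqrt n}-o(\frac{1}{\sqrt n})$, using $1-x^2\ge 1-(1-\frac{C}{\sqrt n})^2\ge \frac{2C}{\sqrt n}-\frac{C^2}{n}$. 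Plugging in $y=\frac1n$, the numerator is $O(\frac1n)$ while the denominator is $\gtrsim \frac{C^2}{n}$ only near the endpoints, so I would instead keep the denominator as roughly $(1-x^2)^2$ and integrate $\int \frac{O(1/n)}{(1-x^2)^2}\,dx$ over $|x|\le 1-\frac{C}{\sqrt n}$, which produces the $\frac{2C}{3\sqrt n}$-type term after the substitution near $x=\pm1$ (the antiderivative of $(1-x^2)^{-2}$ evaluated at the endpoint $1-\frac{C}{\sqrt n}$ scales like $\frac{\sqrt n}{C}$, and multiplying by $\frac1n$ and the numerator constants yields $\frac{c}{\sqrt n}$).

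On the complementary region $|x|\le 0.3$ (or whatever fixed threshold is used), the lower bound on $|zW_n(z)-1|$ coming from $|1-x^2|$ is now a fixed positive constant ($\ge 1-0.09=0.91$, minus a small error), so the denominator is bounded below by a fixed constant; combined with the numerator being $O(\frac1n)$ uniformly (here one uses $y=\frac1n$ and the crude bound $|W_n(z)|\le |x|+\frac{|\alpha|}{\sqrt n}+\frac{1}{\sqrt n}|G_\omega(\sqrt n z)|$, where on $|x|\le 0.3$ one may bound $|G_\omega(\sqrt n z)|$ by the cruder estimate $\frac{\omega(\mathbb R)}{y\sqrt n}=\frac{K\sqrt n}{1}$... wait, that blows up, so instead one uses $|G_\omega(\sqrt n z)|\le \frac{\omega(\mathbb R)}{\mathrm{Im}(\sqrt n z)}=\frac{\omega(\mathbb R)}{\sqrt n y}=K\sqrt n$, which is too large). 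The cleaner route near $x=0$ is to note $\mathrm{Im}(\sqrt n z)=\sqrt n y = \frac{1}{\sqrt n}$, so (\ref{equ.cauchy.bound}) gives $|G_\omega(\sqrt n z)|\le K\sqrt n$, hence $\frac{1}{\sqrt n}|G_\omega(\sqrt n z)|\le K$; this is $O(1)$, not small, but still bounded, so $|W_n(z)|=O(1)$ and $\mathrm{Im}(W_n(z))\le y+\frac{1}{\sqrt n}|G_\omega(\sqrt n z)|\le \frac1n+K$. That is not $o(1)$ either, so the numerator $y|W_n|^2+\mathrm{Im}(W_n)$ is merely $O(1)$ near $x=0$ — which forces a more careful argument: I expect one must use that $-\mathrm{Im}\,G_{\mu_n}(z)$ is the density of the probability measure $\mu_n^{y}$ at $x$, and bound $\int_{|x|\le 0.3}(-\frac1\pi \mathrm{Im}\,G_{\mu_n})\,dx\le \mu_n^{y}((-0.3,0.3])$, then argue this is small because $\mu_n\to \mathbf b$ which has no mass near $0$. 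That is the step I expect to be the main obstacle: getting a genuinely small (i.e. $O(\frac{1}{\sqrt n})$) bound on the mass that $\mu_n^y$ puts near the origin, presumably via a second-moment or resolvent estimate on $W_n$ showing $|W_n(z)|$ is bounded below near $x=0$ so that $F_{\mu_n}(z)=z-\frac{1}{W_n(z)}$ stays away from the real axis, forcing $-\mathrm{Im}\,G_{\mu_n}$ to be small there after all.

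Concretely, I would organize the write-up as: (i) fix the threshold $t_0=0.3$ and split $[-1+\frac{C}{\sqrt n},1-\frac{C}{\sqrt n}]=\{|x|\le t_0\}\cup\{t_0\le |x|\le 1-\frac{C}{\sqrt n}\}$; (ii) on $\{t_0\le|x|\le 1-\frac{C}{\sqrt n}\}$, use (\ref{eq.abs.G}), (\ref{eq.im.cauchy2}), the lower bound $|zW_n(z)-1|\ge |1-x^2|-C'/\sqrt n$, and explicit integration of $\int \frac{C''/n}{(1-x^2)^2}dx$ to get the $\frac{2C}{3\sqrt n}$ term; (iii) on $\{|x|\le t_0\}$, bound $-\mathrm{Im}\,G_{\mu_n}(z)$ using the chosen constant $C>1+\frac{1}{0.3^2}(30K+1)$ to force $|W_n(z)|$ and hence $|F_{\mu_n}(z)|$ away from a critical value, yielding $-\frac1\pi\int_{|x|\le t_0}\mathrm{Im}\,G_{\mu_n}\,dx<\frac{6}{\pi\sqrt n}$. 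The arithmetic linking the explicit constants ($5$, $|\alpha|+2$, $4(K+1)^2$, $1+\frac{1}{0.3^2}(30K+1)$, and the lower bounds on $n$) to the two error terms $\frac{2C}{3\sqrt n}$ and $\frac{6}{\pi\sqrt n}$ is routine but fiddly, and I would relegate it to a sequence of displayed inequalities mirroring the style of Lemma 1. The main obstacle remains the region near $x=0$, where one cannot treat $W_n$ as a small perturbation of $z$ and must instead exploit that $\omega$ has finite second moment (the $K=\max\{\omega(\mathbb R),\int t^2 d\omega\}$ enters precisely here) to keep $G_\omega(\sqrt n z)$, and therefore $F_{\mu_n}(z)$, under control.
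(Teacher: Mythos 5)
Your proposal correctly identifies the two representations (\ref{eq.im.cauchy.un}) and (\ref{eq.im.cauchy2}), the bound (\ref{eq.abs.G}), and the general shape of the argument away from the origin. But there is a genuine gap, which you yourself flag as ``the main obstacle'': the region near $x=0$. Your proposed fix (bounding the mass of $\mu_n^y$ near the origin via convergence to $\mathbf b$, or making the integrand small on a \emph{fixed} neighborhood of $0$) is not how the paper handles it and would not easily yield the explicit $O(\frac{1}{\sqrt n})$ rate.

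The paper's key move near $0$ is to use a \emph{shrinking} interval, not a fixed one. It splits the middle interval into three zones rather than your two: (a) $0.4\le|x|\le 1-\frac{C}{\sqrt n}$, (b) $\frac{\sqrt C}{\sqrt n}\le|x|\le 0.4$, and (c) $|x|\le\frac{\sqrt C}{\sqrt n}$. On zone (c), the paper does \emph{not} try to make $-\mathrm{Im}\,G_{\mu_n}(z)$ small; it only shows $-\mathrm{Im}\,G_{\mu_n}(z)=O(\sqrt C)$ (via the crude bound $\frac{1}{\sqrt n}|G_\omega(\sqrt n z)|\le\omega(\mathbb R)$ you noticed, giving $|W_n|<\frac{\sqrt C}{2}$ and hence $|zW_n-1|>\frac12$), and then wins because the interval has length only $\frac{2\sqrt C}{\sqrt n}$. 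This is exactly what produces the $\frac{2C}{3\sqrt n}$ term — not the near-endpoint region, as you conjectured. The endpoint zone (a) is handled by (\ref{eq.im.cauchy.un}) and contributes only $\frac{1}{\pi\sqrt n}$ per side; the middle zone (b) is needed precisely because the trivial bound $|G_\omega(\sqrt n z)|\le K\sqrt n$ (which you tried and correctly discarded) is useless there — instead (\ref{eq.abs.G}) gives $|G_\omega(\sqrt n z)|=O(\frac{1}{x}+\frac{\sqrt n}{x^2 n})$, and the resulting integrand $\frac{1}{n}+\frac{K}{nx}+\frac{K}{nx^2}$ is integrable from $\frac{\sqrt C}{\sqrt n}$ to $0.4$ with an $O(\frac{1}{\sqrt n})$ total. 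So the scale $\frac{\sqrt C}{\sqrt n}$ is the threshold at which the quality of your pointwise bound on $G_\omega$ changes; your fixed-threshold decomposition cannot capture this, which is why you get stuck.

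In short: the missing idea is to shrink the central interval to length $\Theta(\frac{1}{\sqrt n})$ so that an $O(1)$ bound on the integrand suffices, and to insert an intermediate zone $[\frac{\sqrt C}{\sqrt n},0.4]$ where Proposition \ref{pro-cau.tra.est} (with $i=2$) controls $G_\omega$. Without this, the near-origin contribution is not controlled, and the argument does not close.
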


\begin{proof}
We deliver the estimation of this integral in three parts.

First, let us suppose that $x\in [0.4,1-\frac{C}{\sqrt{n}}]$. By ($\ref{eq.abs.G}$), it follows  that $|G_{\omega}(\sqrt{n}z)| < \frac{30K}{\sqrt{n}}$. 

Our objective is to bound $-Im(G_{\mu_n}(z))$. We begin by bounding $Im(F_{\mu_n}(z))$.
We claim that $|W_n(z)|> 0.3$. Indeed, from the definition of $W_n(z)$ we see that 
$|W_n(z)|\geq |z|-\frac{|\alpha|}{\sqrt{n}}-|\frac{1}{\sqrt{n}}G_{\omega}(\sqrt{n}z)|>x-\frac{|\alpha|}{\sqrt{n}}-\frac{30K}{n}$. So, the claim follows as $n$ is larger than $20\cdot30K$ and $20^2\alpha^2$.

Next, note that $Im(W_n(z))=y-\frac{1}{\sqrt{n}}Im(G_{\omega}(\sqrt{n}z))< \frac{30K+1}{n} $. It follows that

\begin{equation}\nonumber
    Im(F_{\mu_n}(z))=y+\frac{Im(W_n(z))}{|W_n(z)|^2}< \frac{1+\frac{1}{0.3^2}(30K+1)}{n}<\frac{C}{n}.
\end{equation}

Now, let us bound below $|F_{\mu_n}(z)|$. 
Observe that
\begin{equation*}
   Re(\frac{1}{W_n(z)})=\frac{1+Im(W_n(z))Im(\frac{1}{W_n(z)})}{Re(W_n(z))}.
\end{equation*}
We have that $Re(W_n(z))\leq x+|\frac{\alpha}{\sqrt{n}} +\frac{1}{\sqrt{n}}G_{\omega}(\sqrt{n}z)|< 1-\frac{C}{\sqrt{n}}+\frac{|\alpha|}{\sqrt{n}}+\frac{30K}{n}$. Recall that $|W_n(z)|>0.3$ and $Im(W_n(z))< \frac{30K+1}{n}$. It follows that $1+Im(W_n(z))Im(\frac{1}{W_n(z)})>1-\frac{(30K+1)^2}{0.3^2n^2}$. Since the last quantity is bigger than $Re(W_n(z))$, as $C>|\alpha|+2$ and  $n$ is larger than $(30k+1)^2$ and $16$, then 
we deduce that $Re(\frac{1}{W_n(z)})>1$. Thus, 
\begin{equation}\nonumber
 |F_{\mu_n}(z)|\geq |x-Re(\frac{1}{W_n(z)})| \geq|Re(\frac{1}{W_n(z)})|-x> 1-x.
\end{equation}

By the above estimations and ($\ref{eq.im.cauchy.un}$), we obtain that $-Im(G_{\mu_n}(z))< \frac{C/n}{(x+1)^2}$ for $x\in  (0.4,1-\frac{C}{\sqrt{n}}]$. We conclude that

\begin{equation}\label{eq.int.media.1}
    -\frac{1}{\pi}\int_{0.4}^{1-\frac{C}{\sqrt{n}}} ImG_{\mu_n}(z)dx<\frac{1}{\pi}\int_{0.4}^{1-\frac{C}{\sqrt{n}}}\frac{C/n}{(1-x)^2}dx< \frac{1}{\pi \sqrt{n}}.
\end{equation}

With minor modifications on this argument, we can also conclude the same estimation of this integral for $x\in [-1+\frac{C}{\sqrt{n}},-0.4]$. 

Secondly, suppose that $x\in (\frac{\sqrt{C}}{\sqrt{n}},0.4]$. 
By ($\ref{eq.abs.G}$), it follows that $|G_{\omega}(\sqrt{n}z)|<(\frac{2}{\sqrt{C}}+\frac{2\sqrt{n}}{C})K$. 
Our goal is to bound the expression ($\ref{eq.im.cauchy2}$).

Note that $|W_n(z)|\leq |z|+\frac{|\alpha|}{\sqrt{n}}+|\frac{1}{\sqrt{n}}G_{\omega}(\sqrt{n}z)|<x+y+\frac{|\alpha|}{\sqrt{n}}+\frac{2K}{\sqrt{Cn}}+\frac{2K}{C}<1$ since $C>6K+1$ and $n$ is larger $20^2\alpha^2$ and $200K$. 
Moreover, as $|z|<\frac{1}{2}$, we obtain that

\begin{equation}\nonumber
 |zW_n(z)-1|\geq 1-|zW_n(z)|>\frac{1}{2}.   
\end{equation}

Now, we have that $Im(W_n(z))=y-\frac{1}{\sqrt{n}}Im(G_{\omega}(\sqrt{n}z))<\frac{1}{n}+\frac{1}{\sqrt{n}}(\frac{2}{\sqrt{n}x}+\frac{2}{\sqrt{n}x^2})K $, thus
\begin{equation}\nonumber
    y|W_n(z)|^2 +Im(W_n(z))\leq \frac{2}{n}+\frac{2K}{nx}+\frac{2K}{nx^2}. 
\end{equation}

By the above estimations and ($\ref{eq.im.cauchy2}$), we deduce that
\begin{equation}\nonumber
    -\frac{1}{\pi}\int_{\frac{\sqrt{C}}{\sqrt{n}}}^{0.4} ImG_{\mu_n}(z)dx< \frac{1}{\pi}\int_{\frac{\sqrt{C}}{\sqrt{n}}}^{0.4} \frac{\frac{2}{n}+\frac{2K}{nx}+\frac{2K}{nx^2}}{\frac{1}{2}}dx.
\end{equation}

Since
$\int_{\frac{\sqrt{C}}{\sqrt{n}}}^{0.4}\frac{1}{nx^2}<\frac{1}{\sqrt{C}\sqrt{n}}$ and $x>\frac{\sqrt{C}}{\sqrt{n}}$, then we conclude that

\begin{equation}\label{eq.int.media.2}
    -\frac{1}{\pi}\int_{\frac{\sqrt{C}}{\sqrt{n}}}^{0.4} ImG_{\mu_n}(z)dx< \frac{2}{\pi n}+\frac{6K}{\pi \sqrt{C}\sqrt{n}}<\frac{2}{\pi\sqrt{n}}.
\end{equation}

By a similar argument we can obtain the same estimation of this integral for $x\in [-0.4,-\frac{\sqrt{C}}{\sqrt{n}}]$.

Finally, suppose that $x\in [-\frac{\sqrt{C}}{\sqrt{n}},\frac{\sqrt{C}}{\sqrt{n}}]$. It follows that $|W_n(z)|\leq |z|+\frac{|\alpha|}{\sqrt{n}}+|\frac{1}{\sqrt{n}}G_{\omega}(\sqrt{n}z)| $. Moreover, since $|G_{\omega}(\sqrt{n}z)|<\frac{\omega(\mathbb{R})}{\sqrt{n}y}$, then we conclude that $|W_n(z)| < x+y+\frac{|\alpha|}{\sqrt{n}}+\omega(\mathbb{R})<\frac{\sqrt{C}}{2}$,
as $C>4(K+1)^2$ and $n$ is larger than $20^2\alpha^2$ and $16C^2$.
Next, note that $|zW_n(z)|\leq |z||W_n(z)|<(\frac{\sqrt{C}}{\sqrt{n}}+\frac{1}{n})\frac{\sqrt{C}}{2}<\frac{1}{2}$ as $n>16C^2$. Thus, we obtain that
\begin{equation*}
   |zW_n(z)-1|\geq 1-|zW_n(z)|>\frac{1}{2}. 
\end{equation*}

Now, by the above estimations, we also have that
\begin{equation}\nonumber
    y|W_n(z)|^2 +Im(W_n(z))< \frac{C}{4n}+\frac{\sqrt{C}}{2}. 
\end{equation}

By ($\ref{eq.im.cauchy2}$), we deduce that $-Im(G_{\mu_n}(z))<\frac{C}{2n}+\sqrt{C}$, and so we conclude that

\begin{equation}\nonumber
    -\frac{1}{\pi}\int_{-\frac{\sqrt{C}}{\sqrt{n}}}^{\frac{\sqrt{C}}{\sqrt{n}}} Im(G_{\mu_n}(z))dx < \frac{2\sqrt{C}}{\pi \sqrt{n}}(\frac{C}{2n}+\sqrt{C})<\frac{2C}{3\sqrt{n}},
\end{equation}
as $C>5$ and $n>16C^2$.
From this estimation, ($\ref{eq.int.media.1}$), and ($\ref{eq.int.media.2}$), the desired result follows.

\end{proof}

Now, we are ready to conclude the proof.
Let $\epsilon_1=\frac{C}{\sqrt{n}}$ and $\epsilon_2=\frac{2}{\sqrt{n}}$.
By Theorem 1, we have that
\begin{align}
    \mu_n((-\infty,-1-\epsilon_1-\epsilon_2])&\leq -\frac{1}{\pi}\int_{-\infty}^{-1-\epsilon_1}Im(G_{\mu_n}(z))dx
    +\frac{2y}{\pi \epsilon_2}, \nonumber \\  
    \mu_n([-1+\epsilon_1+\epsilon_2,1-\epsilon_1-\epsilon_2])&\leq -\frac{1}{\pi}\int_{-1+\epsilon_1}^{1-\epsilon_1}Im(G_{\mu_n}(z))dx
    +\frac{2y}{\pi \epsilon_2}, \text{ and} \nonumber \\
    \mu_n([1+\epsilon_1+\epsilon_2,\infty])&\leq -\frac{1}{\pi}\int_{1+\epsilon_1}^{\infty}Im(G_{\mu_n}(z))dx
    +\frac{2y}{\pi \epsilon_2}. \nonumber
\end{align}

The previous lemmas implies that $\mu_n((-\infty,-1-\epsilon_1-\epsilon_2])<\frac{1}{\pi \sqrt{n}}+\frac{1}{\pi \sqrt{n}}$, $\mu_n([-1+\epsilon_1+\epsilon_2,1-\epsilon_1-\epsilon_2])<\frac{2C}{3 \sqrt{n}}+\frac{6}{\pi \sqrt{n}}+\frac{1}{\pi \sqrt{n}}$, and $\mu_n([1+\epsilon_1+\epsilon_2,\infty])<\frac{1}{\pi \sqrt{n}}+\frac{1}{\pi \sqrt{n}}$. Since $\frac{2C}{3 \sqrt{n}}+\frac{11}{\pi \sqrt{n}}<\frac{C}{\sqrt{n}}+\frac{2}{\sqrt{n}}=\epsilon_1+\epsilon_2$ for $C>5$, then we obtain that
\begin{equation}\nonumber
    \mu_n((-1-\epsilon_1-\epsilon_2,-1+\epsilon_1+\epsilon_2)\cup (1-\epsilon_1-\epsilon_2,1+\epsilon_1+\epsilon_2))> 1-\epsilon_1-\epsilon_2.
\end{equation}

Therefore, by Proposition ($\ref{ldmuab}$), we conclude that
\begin{equation}\nonumber
    L(\mu_n,\mathbf{b})\leq \frac{7}{2}\frac{C+2}{\sqrt{n}}.
\end{equation}

\section*{Aknowlwdgments}
Thanks to the PRODEP postdoc program of the UASLP.

\bibliographystyle{unsrt}




\end{document}